\newcommand{\dfn}[1]{\textit{#1}}
\newcommand{\cont}{\subseteq}
\newcommand{\iso}{\cong}
\newcommand{\End}[2][]{\operatorname{End}_{#1}\left( #2\right)}
\newcommand{\Mod}[1]{{#1}\mbox{-}\operatorname{Mod}}
\newcommand{\sdprod}[1][]{\hspace{0.3ex}{\rtimes}_{#1}\hspace{0.3ex}}
\newcommand{\Max}[1]{\mathrm{Max}\left({#1}\right)}
\newcommand{\m}{\mathfrak{m}}
\newcommand{\U}{\operatorname{U}}
\newcommand{\aar}[2][]{\xrightarrow[#1]{#2}}
\def\set#1#2{\left\{{#1}\left.\right|\,{#2}\right\}}
\newtheorem{theorem}{Theorem}[section]
\newtheorem{proposition}[theorem]{Proposition}
\newtheorem{lemma}[theorem]{Lemma}
\newtheorem{corollary}[theorem]{Corollary}
\theoremstyle{definition}
\newtheorem{definition}{Definition}[section]
\newtheorem{remark}[theorem]{Remark}
\newtheorem{example}[theorem]{Example}
\numberwithin{equation}{section}
\newtheorem{question}[theorem]{Problem}
\numberwithin{equation}{section}
\begin{document}

\title{On extensions of Cohen Structure Theorem}

\author{Elena Caviglia}
\address{(Elena Caviglia) [1] Department of Mathematics, Stellenbosch University, South Africa. [2]  National Institute for Theoretical and Computational Sciences (NITheCS), South Africa.}
\email{elena.caviglia@outlook.com}

\author{Amartya Goswami}

\address{(Amartya Goswami) [1] Department of Mathematics and Applied Mathematics, University of Johannesburg, P.O. Box 524, Auckland Park 2006, South Africa. [2]  National Institute for Theoretical and Computational Sciences (NITheCS), South Africa.}

\email{agoswami@uj.ac.za}

\author{Luca Mesiti}
\address{(Luca Mesiti) Department of Mathematics, University of KwaZulu-Natal, South Africa.}
\email{luca.mesiti@outlook.com}

\subjclass{13H99, 13E05, 13B30.}




\keywords{Cohen structure theorem, local rings, semi-direct product, Noetherian ring}

\begin{abstract}
The aim of this paper is to extend Cohen structure theorem beyond local rings. Both Cohen structure theorem and Nagata's generalization of it are special cases of our results. We investigate for which rings 
$R$ there exists a maximal ideal 
$\m$ of 
$R$ such that the canonical projection 
$R\to R/\m$ has a section, so that 
$R/\m$ is isomorphic to a field 
$\kappa$ contained in 
$R$. We present two equivalent characterizations of this property and use them to exhibit two classes of rings that satisfy it. Moreover, we provide several examples (not necessarily local or complete local), as well as methods to construct new examples.
\end{abstract}

\maketitle 

\section{Introduction}

Let $A$ be a local Noetherian ring with maximal ideal $\mathfrak{m}$. The ring $A$ is said to be \emph{equicharacteristic}
 if $A$ has the same characteristic as its residue field $A/\mathfrak{m}$. A \emph{field of representatives} for $A$
is a subfield $\kappa$ in $A$ that maps onto $A/\mathfrak{m}$ under the canonical projection. Since
$\kappa$ is a field, the restriction of this mapping to $\kappa$ gives an isomorphism between $\kappa$ and $A/\mathfrak{m}$.
The
Cohen structure theorem  says that

\begin{theorem}\label{C46}
Every equicharacteristic complete local Noetherian commutative ring $A$ admits a field of representatives.
\end{theorem}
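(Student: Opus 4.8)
The plan is to produce the field of representatives by a maximality argument backed by Hensel's lemma — which settles characteristic $0$ and the ``separable part'' of the residue field in all characteristics — and then, in positive characteristic, to build the field directly from a lifted $p$-basis, which is exactly where completeness is harnessed.

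Write $k=A/\m$ and $p=\operatorname{char}k$. Being equicharacteristic forces $A$ to contain a subfield, namely $\mathbb{Q}$ if $p=0$ (every nonzero integer avoids $\m$, hence is a unit of the local ring $A$) and $\mathbb{F}_p$ if $p>0$ (then $p\cdot 1_A=0$). So the poset of subfields of $A$ is nonempty, and Zorn's lemma yields a maximal subfield $\kappa$; since $\kappa$ is a field, $\kappa\cap\m=0$ and the canonical projection embeds $\kappa$ into $k$ with some image $\bar\kappa$. It suffices to prove $\bar\kappa=k$. Assume not and take $\bar a\in k\setminus\bar\kappa$. If $\bar a$ is transcendental over $\bar\kappa$, lift it to $a\in A$: then $\kappa[a]$ is a polynomial ring (any relation reduces modulo $\m$ to one over $\bar\kappa$), its nonzero elements avoid $\m$ and are thus units of $A$, so $\kappa(a)\hookrightarrow A$, contradicting maximality; hence $k/\bar\kappa$ is algebraic. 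If some $\bar b\in k\setminus\bar\kappa$ is separable over $\bar\kappa$, lift its minimal polynomial coefficientwise to a monic $g\in\kappa[T]$; as $A$ is complete it is Henselian, so Hensel's lemma refines $\bar b$ to a root $b\in A$ of $g$, and $\kappa[b]\iso\kappa[T]/(g)$ is a subfield of $A$ properly containing $\kappa$, again a contradiction. Therefore $k/\bar\kappa$ is purely inseparable, which already gives $\bar\kappa=k$ when $p=0$. From now on $p>0$ and the remaining obstruction is a nontrivial purely inseparable algebraic extension $k/\bar\kappa$.

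This last case is the heart of the matter; the crude idea of extracting $p$-th roots in $A$ fails because elements of $\m$ need not have them. Instead I would fix a $p$-basis $\{\bar x_\lambda\}_{\lambda\in\Lambda}$ of $k$ over $\mathbb{F}_p$, so that for every $n\ge1$ the monomials $\bar x^{\underline e}=\prod_\lambda\bar x_\lambda^{e_\lambda}$ with $0\le e_\lambda<p^n$ form a basis of $k$ over $k^{p^n}$; lift each $\bar x_\lambda$ to $x_\lambda\in A$; and put $A_n=A^{p^n}[\{x_\lambda\}_\lambda]\cont A$, where $A^{p^n}=\{a^{p^n}:a\in A\}$ is the image of the $n$-th iterate of Frobenius (a subring because $p>0$), so that $A_{n+1}\cont A_n$. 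The technical core is the inclusion
\begin{equation*}
\m\cap A_n\ \cont\ \m^{p^n}\qquad(n\ge1),
\end{equation*}
obtained by reducing the exponent of each $x_\lambda$ modulo $p^n$ (the quotients being absorbed into $A^{p^n}$), invoking the $k^{p^n}$-linear independence of the $\bar x^{\underline e}$ to force the surviving coefficients in $A^{p^n}$ into $\m$, and finally noting that $a^{p^n}\in\m$ implies $a\in\m$ and hence $a^{p^n}\in\m^{p^n}$. I omit this routine bookkeeping.

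Passing to $\m$-adic closures preserves $\m\cap\overline{A_n}\cont\m^{p^n}$, so
\begin{equation*}
\kappa:=\bigcap_{n\ge1}\overline{A_n}
\end{equation*}
is a subring of $A$ with $\kappa\cap\m\cont\bigcap_{n}\m^{p^n}=0$ by Krull's intersection theorem (here Noetherianity enters); consequently every nonzero element of $\kappa$ is a unit of $A$, so $\kappa$ is a field. Finally $\kappa$ maps onto $k$: given $\bar a\in k$, expand it at each level $n$ via the monomial basis as $\bar a=\sum_{0\le e_\lambda<p^n}\bar x^{\underline e}\bar c_{\underline e}^{p^n}$ with $\bar c_{\underline e}\in k$, lift the $\bar c_{\underline e}$ to arbitrary $c_{\underline e}\in A$, and set $a_n=\sum_{\underline e}x^{\underline e}c_{\underline e}^{p^n}\in A_n$. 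Each $a_n$ reduces to $\bar a$, so for $m\ge n$ we get $a_n-a_m\in\m\cap A_n\cont\m^{p^n}$; thus $(a_n)_n$ is Cauchy and — this is precisely where completeness is used — converges to some $\hat a\in A$. Since $a_m\in A_n$ for all $m\ge n$, the limit lies in $\overline{A_n}$ for every $n$, i.e.\ $\hat a\in\kappa$, and $\hat a$ reduces to $\bar a$; so $\kappa$ is a field of representatives. The single genuinely hard step is thus the positive-characteristic inseparable case, and within it the inclusion $\m\cap A_n\cont\m^{p^n}$ together with the $\m$-adic limiting argument that turns a lifted $p$-basis into an honest subfield.
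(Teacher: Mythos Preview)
The paper does not actually prove Theorem~\ref{C46}; it is quoted in the introduction as the classical Cohen structure theorem, with a citation to Cohen's original 1946 paper \cite{Coh46}, and is thereafter used as a known black box (notably in Proposition~\ref{expr}). There is therefore no ``paper's proof'' to compare your attempt against.

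That said, your argument follows the standard textbook route (maximal subfield plus Hensel's lemma to handle characteristic~$0$ and the separable part, then a lifted $p$-basis together with $\m$-adic completeness for the purely inseparable part in characteristic~$p$) and is essentially correct. One small logical slip: from ``every nonzero element of $\kappa$ is a unit of $A$'' you cannot directly conclude ``$\kappa$ is a field'' --- a subring whose nonzero elements are units of the ambient ring need not itself be a field (think $\mathds{Z}\cont\mathds{Q}$). The fix is painless and already contained in what you wrote: first run your surjectivity argument to see that the injection $\kappa\hookrightarrow k$ is also onto, whence $\kappa\iso k$ is a field; equivalently, for nonzero $a\in\kappa$ lift $\bar a^{-1}\in k$ to some $b\in\kappa$ via surjectivity and note $ab-1\in\m\cap\kappa=0$.
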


Over the years, this pivotal result in commutative algebra has proved to be significant--for example, in proving category equivalence (see \cite[Proposition 1.1]{GN01}) between the
category of all finite $R$-modules of finite projective and all finite $R$-modules of finite injective dimension for noncommutative Cohen–Macaulay local rings $R$; in application (see \cite[Theorem 3.1]{GH97}) of asserting  the existence of a local extension ring $S$  of local Noetherian $R$ under certain conditions on $S$; in proving a result on $2$-step solvable groups of finite Morley rank (see \cite[Section 3]{EN90}); in showing the existence of a local field (see \cite[Theorem 5]{LW73}) of a topological
quotient
ring $A$ (with characteristic
either
zero or a prime) of a commutative,
special
basic
ring; in  characterizing  a special local ring (see \cite[Theorem 2]{Kol68}) which is
Hausdorff with respect to the $U$-adic topology; in theorems (see \cite{Zar47}) on quotient rings of simple subvarieties; in constructing functors (see \cite[Theorem 1.4]{Bor04}) between the category of complete residual
perfections of a ring and  the full subcategory of the category of $\mathds{F}_p$-algebras whose
objects are perfect; in showing the existence of a functor (see \cite[Proposition 3.1.4]{Ked07}) from based fields to based Cohen rings which
is a quasi-inverse of the residue field functor; in showing a $K$-isomorphism (see \cite[Proposition 5.3]{GR09}) of a complete $n$-discrete $k$-field containing a perfect subfield $K$ (with trivial valuation); in establishing the reciprocity law (see \cite[Lemma 2.8]{Mor12}) along a vertical curve for residues of differential
forms on arithmetic surfaces; showing well-definiteness of a family of invariants (see \cite{N-BW13}) associated to any local ring whose residue
field has prime characteristic; in Dieudonné theory (see \cite{CL17}) over complete regular local rings; in passaging from an order-by-order solution to a formal solution (see \cite[Theorem 2.4]{BGK22}); in proving an embedding lemma (see \cite[Lemma 4.6]{ADJ24}) for $\mathds{Z}$-valued fields.

Apart from its numerous applications, as discussed above, significant research has been dedicated to the structure theorem itself. In \cite{Coh46}, local ring actually means a commutative unitary Noetherian ring with a unique maximal ideal; whereas in \cite{Nag50} Nagata obtained a similar structural result weakening the Noetherian condition. The proof of the latter result was further simplified in \cite{Nar55}. On the other hand, a weaker version of Cohen structure theorem has been proved for any local ring (not necessarily complete or Noetherian) in \cite{Mat77}.

Our aim in this paper is to extend Cohen structure theorem, studying which rings $R$ have the following property:
\smallskip

$(\star)\;$ \textit{
There exists a maximal ideal $\m$ of $R$ such that the canonical projection $R\aar{\pi} R/\m$ has a section, so that $R/\m$ is isomorphic to a field $\kappa$ contained in $R$.
}
\smallskip

In other words, $(\star)$ asks $R$ to contain a subfield $\kappa$ and a maximal ideal $\m$ such that $R/\m\iso \kappa$, in a way that gives a section to the canonical projection $R\to R/\m$. Of course $R/\m$ is always a field, but we are interested in when this field is a subring of $R$. Notice that property $(\star)$ is stronger than having
\[R/\m\iso \kappa\cont R,\]
as we want the ring homomorphism $R/\m\iso\kappa\cont R$ to be a section of the canonical projection $R\to R/\m$. 
For a local Noetherian ring, property $(\star)$ coincides with having a field of representatives. So by Theorem \ref{C46}, every equicharacteristic complete local Noetherian commutative ring satisfies property $(\star)$. For example, the ring of formal power series \( \kappa\llbracket x_1, \ldots, x_n\rrbracket \) over a field \( \kappa \) with maximal ideal $\langle x_1,\ldots, x_n\rangle$. And of course a trivial example is any field $\kappa$. Moreover, notice that if a ring $A$ contains a field $\kappa$, then it is equicharacteristic.

However, there exist numerous rings that satisfy property 
$(\star)$ without being complete local or even local (see Section \ref{mex}). Our aim is to gain a deeper understanding of the class of rings—much broader than those satisfying the assumptions of Cohen's structure theorem—that satisfy property 
$(\star)$.

We show how Cohen structure theorem as well as Nagata's generalization are special cases of our result (see Theorem \ref{theorAB}). We first prove two equivalent characterizations (Theorem \ref{theorsdprod} and Theorem \ref{theorsum}) of property $(\star)$, which will then help us check property $(\star)$ more easily. In fact, usually way more easily than checking the completeness property required in Cohen structure theorem. We achieve our characterizations considering a semidirect product of rings. Moreover, we produce several methods to construct examples of rings that satisfy property $(\star)$ (see for example Proposition \ref{propsr} and Proposition \ref{proplochasstar}). We present numerous examples of different kinds of rings that satisfy property $(\star)$.

To set up the ground, let us settle with the  notation and terminologies. Unless otherwise specified, all rings considered in this paper will be unitary with identity element denoted as 1, and all ring homomorphisms will preserve 1. All the ideals we consider will be two-sided. We denote the set of maximal ideals in a ring $R$ by $\mathrm{Max}(R),$ whereas the notation $\mathfrak{m}^*$  denotes the set of all non-zero elements in a maximal ideal $\mathfrak{m}.$ By $U(R)$, we denote the set of invertible elements in $R$. If $\kappa$ is a field, then the set of all invertible elements in $\kappa$ is denoted by $\kappa^*$. When $\kappa\subseteq A$, then we sometimes represent it as the canonical inclusion map $\iota\colon \kappa\to A,$ and we  write  $\iota(\kappa)$. Such a ring $R$ can be seen as a $\kappa$-algebra. Whenever we say `$R$ is a $\kappa$-algebra', we mean that there exists a copy of the field $\kappa$ included in $R$.

\section{Motivating examples}\label{mex}

In this section, we present several motivating examples that led us to study the problem of interest.

\begin{example}
    Rings of polynomials \( \kappa[x_1, \dots, x_n] \) with coefficients in a field $\kappa$ clearly satisfy property $(\star)$. Indeed, \(\langle x_1, \dots, x_n\rangle\) is a maximal ideal. Moreover the ring homomorphism
    $$\kappa[x_1, \dots, x_n]\to \kappa$$
    that sends every polynomial $p$ to its degree 0 coefficient $p(0)$ is surjective, and its kernel is precisely $\langle x_1, \dots, x_n\rangle$. By the first isomorphism theorem, we obtain that 
    $$\kappa[x_1, \dots, x_n]/\langle x_1, \dots, x_n\rangle\iso \kappa.$$
    And, of course, a copy of $\kappa$ is included in $\kappa[x_1, \dots, x_n]$. It is easy to check that  \[\kappa[x_1, \dots, x_n]/\langle x_1, \dots, x_n\rangle\iso \kappa\cont \kappa[x_1, \dots, x_n]\] gives a section to the canonical projection $\kappa[x_1, \dots, x_n]\to \kappa[x_1, \dots, x_n]/\langle x_1, \dots, x_n\rangle$. 
\end{example}

\begin{remark}
    Although \( \mathds{R}[x] \) satisfies property $(\star)$, exhibited by $\langle x\rangle\in \Max{\mathds{R}[x]}$ and the field $\mathds{R}\cont \mathds{R}[x]$, not every maximal ideal of $\mathds{R}[x]$ works. Indeed, $\langle x^2+1\rangle\in \Max{\mathds{R}[x]}$,  but $\mathds{R}[x]/\langle x^2+1\rangle\iso \mathds{C}\not\subseteq\mathds{R}$.
\end{remark}

The following example shows that there are non-radical rings that also satisfy property $(\star)$.

\begin{example}
    Consider the quotient ring $R=\kappa[x]/\langle x^2\rangle$, where $\kappa$ is a field. The ideal $\langle [x]\rangle$ is maximal in $R$ and the quotient $R / \langle [x]\rangle$ is isomorphic to $\kappa$. Indeed, the ring homomorphism that sends $[p]\in R$ to the degree zero coefficient of the polynomial $p$ is surjective and has kernel $\langle [x]\rangle$. 
    Moreover, it is easy to check that this ring homomorphism gives a section to the canonical projection $R \to R/ \langle [x]\rangle$. So $R$ has property $(\star)$. But $R$ is not radical as $[x]$ is a non-zero nilpotent element of $R$.
\end{example}

The following is another example of a class of rings that satisfy property $(\star)$, although being of different nature.

\begin{example}\label{exfunctions}
    Let $\kappa$ be a field and let $X$ be a set. Consider the ring \( \operatorname{Hom}(X,\kappa) \) of all functions from $X$ to $\kappa$, with sum and product defined pointwise. This is a commutative unitary ring, thanks to the fact that $\kappa$ is so. A copy of $\kappa$ is embedded in  \( \operatorname{Hom}(X,\kappa) \), given by the constant functions. Let $a\in X$ and consider the set
    $$\m_a:=\set{f\in \operatorname{Hom}(X,\kappa)} {f(a)=0}.$$
     Clearly $\m_a$ is an ideal of \( \operatorname{Hom}(X,\kappa) \). Moreover the ring homomorphism
    \[\operatorname{Hom}(X,\kappa)\to \kappa\]
   defined by $(f\colon X\to \kappa)\mapsto f(a)$ is surjective 
   and has kernel $\m_a$. Then, by the first isomorphism theorem,
    \[\operatorname{Hom}(X,\kappa)/\m_a\iso \kappa.\]
    As $\kappa$ is a field, this also implies that $\m_a$ is a maximal ideal. Finally, notice that the homomorphism \[\operatorname{Hom}(X,\kappa)/\m_a\iso \kappa\cont \operatorname{Hom}(X,\kappa)\] gives a section to the canonical projection. Indeed, given a function $f\colon X\to \kappa$, we have that $f=\operatorname{const}_{f(a)}+(f-\operatorname{const}_{f(a)})$. As $f-\operatorname{const}_{f(a)}\in \m_a$, we obtain that $[f]=[\operatorname{const}_{f(a)}]$.
\end{example}

\begin{remark}\label{remuniquenessmk}
    Different maximal ideals $\m$ of $\operatorname{Hom}(X,\kappa)$ exhibit, together with $\kappa$, that $\operatorname{Hom}(X,\kappa)$ has property $(\star)$. So the maximal ideal $\m$ involved in property $(\star)$ is not unique in general. Fixed $\m$, though, there can only be one field $\kappa$ up to isomorphism that makes property $(\star)$ hold, which is $R/\m$.
\end{remark}

We now show an example of a local $\kappa$-algebra $(A, \mathfrak{m})$ such that $\kappa$ is a maximal field contained in $A$, but which does not satisfy property $(\star)$.

\begin{example}
Consider the ring $\mathds{Q}[x]$ of polynomials with rational coefficients and its localization at its maximal ideal $\langle x^2 + 1\rangle $:
\[
R := \mathds{Q}[x]_{\langle x^2+1 \rangle} = \set{\dfrac{f(x)}{g(x)} \in \mathds{Q}(x)}{ (x^2 + 1) \text{ does not divide } g(x)} \subset \mathds{Q}(x).
\]
Then $R$ is a local ring whose residue field is isomorphic to $\mathds{Q}[x]/\langle x^2 + 1\rangle \cong \mathds{Q}(i)$ (where $i^2 = -1$). Indeed, the function
\[\mathds{Q}[x]_{\langle x^2 + 1\rangle}\to \mathds{Q}[x]/\langle x^2 + 1\rangle\quad \left(\dfrac{f(x)}{g(x)}\mapsto [f(x)]\cdot [g(x)]^{-1}\right)\]
 is a surjective ring homomorphism with kernel  $\langle x^2+1\rangle \mathds{Q}[x]_{\langle x^2 + 1\rangle}$. The desired isomorphism follows from the first isomorphism theorem.
Obviously $R$ does not contain a copy of $\mathds{Q}(i)$ in it. Notice that $R$ admits a maximal subfield, namely, $\mathds{Q}$. Indeed, let $\kappa$ be a field such that $\mathds{Q} \subseteq \kappa \subseteq R$. Then the canonical map	\[
R \to R/m \cong \mathds{Q}(i)
\]
sends $\kappa$ isomorphically to a subfield of $\mathds{Q}(i)$. Thus, up to isomorphism, we have $\mathds{Q} \subseteq k \subseteq \mathds{Q}(i)$, and since the extension $\mathds{Q} \subseteq \mathds{Q}(i)$ has degree 2 and $k \neq \mathds{Q}(i)$, we must have $k = \mathds{Q}$. Thus, $\mathds{Q}$ is a maximal subfield of $R$. It turns out that $R$ is a local $\mathds{Q}$-algebra, where $\mathds{Q}$ is a maximal subfield of $R$, but its residue field is not isomorphic to $\mathds{Q}$.
\end{example}

\begin{example}
Let $\kappa$ be a field and consider $R:=\kappa\times \kappa$. Then $R$ has a copy of $\kappa$ embedded in it, given by the diagonal. Indeed, the ring homomorphism
    $$\kappa\to \kappa\times \kappa$$
    that sends $a$ to $(a,a)$ preserves 1 and is injective. Notice that the pairs of the form $(a,0)$ with $a\in \kappa$ do not give a copy of $\kappa$ embedded in $\kappa\times \kappa$, as the corresponding injective group homomorphism $\kappa\to \kappa\times \kappa$ does not preserve 1. 
We have that $\kappa\times 0$ is a maximal ideal in $\kappa\times \kappa$. Indeed, in general all ideals of $S\times T$ (with $S,T$ rings) are products of an ideal in $S$ and an ideal in $T$. So if an ideal of $\kappa\times \kappa$ strictly contains $\kappa\times 0$, it needs to be the whole $\kappa\times \kappa$. The ring homomorphism $\kappa\times \kappa\to \kappa$ that sends $(a,b)$ to $b$ is surjective and its kernel coincides with $\kappa\times 0$. So by the first isomorphism theorem, $(\kappa\times\kappa)/(\kappa\times 0)\iso \kappa$. It is easy to check that \[(\kappa\times\kappa)/(\kappa\times 0)\iso \kappa\cont \kappa\times \kappa,\] and, this gives a section to the canonical projection. Indeed, given $(a,b)\in \kappa$, we have that $(a,b)=(b,b)+(a-b,0)$, whence $[(a,b)]=[(b,b)]$.
\end{example}

\section{Main results}\label{secmainresults}

To present our main results, we first establish two equivalent characterizations of property 
$(\star)$. These formulations will facilitate a simpler verification of property 
$(\star)$ for a given ring. In particular, for a local ring, our characterizations are often significantly easier to check than the completeness condition required in Cohen’s structure theorem. We identify two classes of rings that satisfy property 
$(\star)$, one of which strictly generalizes both the class of rings satisfying the assumptions of Cohen structure theorem and those satisfying Nagata’s assumptions. Furthermore, we provide a method for constructing examples of rings that satisfy property 
$(\star)$. In the next section, we shall present additional methods for constructing such examples.

Let us first recall the definition of semidirect products of rings.

\begin{definition}\label{defsdprod}
Let $B$ and $S$ be associative but not necessarily unitary rings. Let 
$$\lambda\colon S\to \End[\Mod{B}]{B_B}$$
be a ring homomorphism and
$$\rho \colon S\to \End[\Mod{B}]{_B B}$$
be a ring anti-homomorphism such that for every $s,$ $t\in S$ and $x,$ $y\in B$, we have
\[\lambda(s)\circ\rho(t)=\rho(t)\circ \lambda(s);\]
\[\rho(s)(x)\cdot y=x\cdot \lambda(s)(y).\] The \dfn{semidirect product} $B\sdprod[\lambda,\rho]S$ of the (not necessarily unitary) rings $B$ and $S$ with respect to $\lambda$ and $\rho$ is constructed as follows. Its underlying additive group is given by the direct sum $B\oplus S$. We then equip it with a product using $\lambda$ and $\rho$ as follows. Given $(b,s),(c,t)\in B\oplus S$, we define
\[(b,s)\cdot (c,t):=(b\cdot c+\lambda(s)(c)+\rho(t)(b),s\cdot t).\]
It is straightforward to check that this is a not necessarily unitary ring. Moreover, the injective group homomorphisms $i_B\colon B\to B\oplus S$ defined by $b\mapsto (b,0)$ and $i_S\colon S\to B\oplus S$ defined by $s\mapsto (0,s)$ preserve products, with respect to the product in $B\sdprod[\lambda,\rho] S$ defined above.
\end{definition}

\begin{remark}\label{remlambdarho}
If $S$ has 1 and both $\lambda$ and $\rho$ preserve 1, then also $B\sdprod[\lambda,\rho] S$ is unitary, with $1=(0,1)$. Moreover $i_S(1)=(0,1)=1$, so $i_S$ is an injective ring homomorphism and $S\cont B\sdprod[\lambda,\rho] S$.

For the rest of this paper, whenever $S$ has 1 and we consider  $B\sdprod[\lambda,\rho] S$, we implicitly assume that $\lambda$ and $\rho$ preserve 1, so that $B\sdprod[\lambda,\rho] S$ is a unitary ring containing a copy of $S$.
\end{remark}

The following theorem is inspired by a personal communication between the second author and George Janelidze.

\begin{theorem}\label{theorsdprod}
Let $R$ be a ring. The following two holds:
\begin{enumerate}
\item If there exists a not necessarily unitary ring $\m$, a field $\kappa$ and $\lambda,\rho$ as in Definition \ref{defsdprod} such that $R:=\m\sdprod[\lambda,\rho] \kappa$, then $\m\in\Max{R}$, $\kappa\cont R$ and $R/\m\iso \kappa$, in a way such that the homomorphism $R/\m\iso \kappa\cont R$ gives a section to the canonical projection, so that $R$ satisfies property $(\star)$.

\item If $R$ satisfies property $(\star)$, exhibited by $\m\in\Max{R}$ and a field $\kappa\cont R$, then $R\iso \m\sdprod \kappa$, with $\lambda$ and $\rho$ acting by multiplication in $R$, thanks to the inclusion $\kappa\cont R$.
\end{enumerate}
In other words, property $(\star)$ is equivalent to the ring being a semidirect product of a not necessarily unitary ring and a field. Moreover, we can always restrict to semidirect products with $\lambda$ and $\rho$ simply given by multiplication in $R$.
\end{theorem}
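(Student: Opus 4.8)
The plan is to verify the two implications directly from Definition~\ref{defsdprod}, with essentially no nontrivial input beyond the first isomorphism theorem and associativity/distributivity in $R$. For part~(1), assume $R=\m\sdprod[\lambda,\rho]\kappa$. By Remark~\ref{remlambdarho} the ring $R$ is unitary with $1=(0,1)$ and $i_\kappa\colon\kappa\to R$ is an injective ring homomorphism, so $\kappa\cont R$. I would then observe that $i_\m(\m)=\m\oplus 0$ is a two-sided ideal of $R$: this is a one-line computation from the product formula, using $\lambda(0)=\rho(0)=0$ and the fact that $0$ is absorbing in $\kappa$. Next, the second projection $p\colon R\to\kappa$, $(b,s)\mapsto s$, is a surjective ring homomorphism (it preserves $1$ since $p(0,1)=1$) with kernel exactly $\m\oplus 0$, so the first isomorphism theorem gives $R/\m\iso\kappa$; since $\kappa$ is a field, $\m$ is maximal. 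Finally $p\circ i_\kappa=\mathrm{id}_\kappa$, so $i_\kappa$ is a section of $p$; transporting along $R/\m\iso\kappa$, the composite $R/\m\iso\kappa\cont R$ is a section of the canonical projection $R\to R/\m$, which is precisely property~$(\star)$.

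For part~(2), assume $(\star)$ holds, witnessed by $\m\in\Max{R}$ and an inclusion $\iota\colon\kappa\to R$ such that $\pi\circ\iota\colon\kappa\to R/\m$ is an isomorphism (where $\pi$ is the canonical projection). I would define $\lambda(s)$ and $\rho(s)$ to be left, respectively right, multiplication on $\m$ by $\iota(s)$; both land in $\m$ because $\m$ is an ideal. The verifications that $\lambda$ is a ring homomorphism into $\End[\Mod{\m}]{\m_\m}$, that $\rho$ is a ring anti-homomorphism into $\End[\Mod{\m}]{_\m\m}$, that the two compatibility identities of Definition~\ref{defsdprod} hold, and that $\lambda,\rho$ preserve $1$, are all immediate from associativity and distributivity in $R$. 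This makes $\m\sdprod[\lambda,\rho]\kappa$ a well-defined unitary ring.

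It then remains to exhibit an isomorphism $\Phi\colon\m\sdprod[\lambda,\rho]\kappa\to R$, for which I would take $\Phi(b,s):=b+\iota(s)$. Additivity is clear; multiplicativity amounts to matching the cross terms $\iota(s)c$ and $b\iota(t)$ obtained by expanding $(b+\iota(s))(c+\iota(t))$ in $R$ with the terms $\lambda(s)(c)$ and $\rho(t)(b)$ appearing in the semidirect-product formula; and $\Phi(0,1)=1$. Injectivity and surjectivity of $\Phi$ are where the hypothesis is genuinely used: if $b+\iota(s)=0$ then $\iota(s)=-b\in\m$, so $\pi(\iota(s))=0$ forces $s=0$ and hence $b=0$; and given $r\in R$, surjectivity of $\pi\circ\iota$ lets us choose $s\in\kappa$ with $\pi(\iota(s))=\pi(r)$, whence $b:=r-\iota(s)\in\m$ and $\Phi(b,s)=r$. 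The closing ``in other words'' and ``moreover'' assertions need no separate argument: they are just the conjunction of (1) and (2) together with the fact that the construction in (2) already uses $\lambda,\rho$ given by multiplication in $R$.

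There is no deep obstacle here; the work is bookkeeping. The main points requiring care are, in part~(2), checking that left and right multiplication by $\iota(s)$ really satisfy every clause of Definition~\ref{defsdprod} --- in particular the asymmetry between $\lambda$ being a homomorphism and $\rho$ an anti-homomorphism, and the two mixed-associativity conditions --- and verifying that $\Phi$ is simultaneously well-defined as a map and bijective, which is the only place the maximality of $\m$ and the section property are actually invoked.
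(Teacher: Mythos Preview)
Your proposal is correct and follows essentially the same approach as the paper: for (1) both use the second projection $\pi_2$ and the first isomorphism theorem to identify $R/\m\iso\kappa$ and then check $i_\kappa$ gives the section, and for (2) both define $\lambda,\rho$ via left/right multiplication in $R$ and exhibit the isomorphism $(x,u)\mapsto x+\iota(u)$. The only cosmetic difference is that the paper constructs an explicit inverse $\psi(z)=(z-j([z]),j([z]))$ rather than arguing injectivity and surjectivity separately, but your surjectivity argument is exactly the construction of $\psi$ and your injectivity argument is the verification that $\psi\circ\Phi=\mathrm{id}$.
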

\begin{proof}
(1) Assume that $R:=\m\sdprod[\lambda,\rho] \kappa$, with $\kappa$ a field. By Remark \ref{remlambdarho}, $i_\kappa$ is an injective ring homomorphism, so that a copy of $\kappa$ is included in $R$. There is a natural projection map on the second component
    \[\pi_2\colon \m\sdprod[\lambda,\rho] \kappa\to \kappa.\]
    Of course, this is a surjective homomorphism of groups. By construction of the multiplication of the semidirect product, $\pi_2$ also preserves the multiplication ($\lambda$ and $\rho$ are only used in the first component), and we have that $\pi_2(0,1)=1$, so that $\pi_2$ preserves 1. The projection map $\pi_2$ is thus a surjective ring homomorphism.
    Notice  that \[\ker(\pi_2)=\set{(x,u)\in \m\sdprod[\lambda,\rho] \kappa}{u=0}\iso \m\] (as rings without $1$). So a copy of $\m$ is a two-sided ideal of $\m\sdprod[\lambda,\rho] \kappa$. Therefore, by the first isomorphism theorem, we obtain 
    \[(\m\sdprod[\lambda,\rho]\kappa) /\m\iso \kappa.\]
    Since $\kappa$ is a field, this also guarantees that $\m$ is a maximal ideal of $\m\sdprod[\lambda,\rho]\kappa$. Moreover, the ring homomorphism
    $$(\m\sdprod[\lambda,\rho]\kappa) /\m\iso \kappa\cont \m\sdprod[\lambda,\rho]\kappa$$
    gives a section to the canonical projection. Indeed, given $(x,u)\in \m\sdprod[\lambda,\rho]\kappa$, we have that $(x,u)=(0,u)+(x,0)$. Since $(x,0)$ belongs to the copy of $\m$, we conclude that $[(x,u)]=[(0,u)]$.

(2) Assume that there exist $\m\in \Max{R}$, a field $\kappa\cont R$ and an isomorphism $j\colon R/\m\iso \kappa$ such that the ring homomorphism $\iota\circ j\colon R/\m\iso \kappa\cont R$ gives a section to the canonical projection $\pi\colon R\to R/\m$. We can construct the semidirect product $\m\sdprod \kappa$, with $\lambda$ and $\rho$ acting by multiplication in $R$, thanks to the inclusion $\kappa\cont R$. More precisely, we define
    $$\lambda\colon\kappa\to \End[\Mod{\m}]{\m_{\m}}$$
    by sending $u\in \kappa$ to the endomorphism $x\in\m\mapsto u\cdot_R x\in \m$. Notice that
    \[\lambda(u)(x+y)=u\cdot_R(x+y)=u\cdot_R x + u\cdot_R y,\]
    \[\lambda(u)(x\alpha)=u\cdot_R(x\cdot_R\alpha)=(u\cdot_R x)\cdot_R\alpha.\]
    So $\lambda(u)$ is linear. Moreover,
    \begin{align*}
\lambda(u+v)(x)&=(u+v)\cdot_R x=u\cdot_R x+v\cdot_R x,    \\
\lambda(u\cdot v)(x)&=u\cdot_R v\cdot_R x=\lambda(u)(\lambda(v)(x)),\\
\lambda(1)&=\operatorname{id}.
   \end{align*}
Therefore, $\lambda$ is a ring homomorphism. Analogously, we define
    \[\rho\colon\kappa\to \End[\Mod{\m}]{_\m \m}\]
    by sending $u\in \kappa$ to the endomorphism $x\in\m\mapsto x\cdot_R u\in \m$, and this is a ring anti-homomorphism. It is easy to check that $\lambda$ and $\rho$ satisfy the compatibility axioms of Definition \ref{defsdprod}.
It remains to prove that $R\iso \m\sdprod \kappa$. Consider the function
    \[\phi\colon\m\sdprod \kappa\to R\]
    defined by $(x,u)\mapsto x+u$ (thanks to the inclusion $\kappa\cont R$). This is of course a group homomorphism and $\phi(0,1)=1$. Moreover
    \begin{align*}
  \phi((x,u)\cdot (y,v))&=\phi(x\cdot y + u\cdot y + x\cdot v,u\cdot v)\\
  &=x\cdot y + u\cdot y + x\cdot v + u\cdot v\\
  &=(x+u)\cdot (y+v).
    \end{align*}
So, $\phi$ is a ring homomorphism. We now construct a function
    \[\psi\colon R\to \m\sdprod \kappa\]
    by defining  $z\mapsto (z-j([z]),j([z]))$. To see that such a function is well defined, consider $z\in R$. Since $j$ (actually, $\iota\circ j$) gives a section to the canonical projection $\pi\colon R\to R/\m$,
    \[\pi(z-j([z]))=\pi(z)-\pi(j([z]))=[z]-[z]=0.\]
    and thus, $z-j([z])\in \ker(\pi)=\m$. It is straightforward to show that $\psi$ is a ring homomorphism, using the fact that $j$ is so and that $\lambda$ and $\rho$ are simply given by multiplication in $R$.
 Finally, we show that $\phi$ and $\psi$ are inverses of each other. Of course $\phi\circ \psi=\operatorname{id}$. Given  $(x,u)\in \m\sdprod \kappa$,
    $$\psi(\phi(x,u))=(x+u-j([x+u]),j([x+u])).$$
    To show that $j([x+u])=u$, it suffices to show that $[x+u]=j^{-1}(u)$. Since $\pi\circ \iota\circ j=\operatorname{id}$ is, in particular, injective, it suffices to show that $\pi(j([x+u]))=\pi(j(j^{-1}(u)))$. But $\pi\circ \iota\circ j=\operatorname{id}$ means to show that $[x+u]=[u]$, which is true as $x\in \m$. So $j([x+u])=u$, and we conclude that $\psi\circ \phi=\operatorname{id}$.
\end{proof}

\begin{remark}\label{remphi}
    The first part of the proof of (2) of Theorem \ref{theorsdprod} does not use the fact that $\m$ and $\kappa$ exhibit property $(\star)$ for $R$. So, every time we have $\m\in \Max{R}$ and a field $\kappa\cont R$, we can construct the semidirect product $\m\sdprod \kappa$, with $\lambda$ and $\rho$ simply given by multiplication in $R$. We will denote such a semidirect product as $\m\sdprod \kappa$ (without specifying $\lambda$ and $\rho$ in the notation of the semidirect product). Moreover, we always have the ring homomorphism
    $$\phi\colon\m\sdprod \kappa\to R$$
    that sends $(x,u)$ to $x+u$.
\end{remark}

The purpose of the next theorem is to aid in identifying two classes of rings that satisfy 
$(\star)$. However, we first need a lemma.

\begin{lemma}\label{propphiinj}
Let $R$ be a ring, $\m\in \Max{R}$ and $\kappa$ be a subfield of $R$. Then the ring homomorphism $\phi$ of Remark \ref{remphi} is injective, so that $\m\sdprod \kappa\cont R$.
\end{lemma}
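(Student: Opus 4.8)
The plan is to show that $\phi$ has trivial kernel. Since $\phi$ is already known to be a ring homomorphism (Remark~\ref{remphi}), injectivity is equivalent to $\ker\phi=\{(0,0)\}$. So I would take an arbitrary element $(x,u)\in\m\sdprod\kappa$ with $\phi(x,u)=x+u=0$ in $R$, and aim to conclude that $x=0$ and $u=0$.

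The key observation is that $x+u=0$ forces $u=-x$, and since $x\in\m$ and $\m$ is (the underlying additive group of) a two-sided ideal of $R$, we get $u\in\m$. Thus $u\in\kappa\cap\m$. Now $\kappa\cap\m$ is a two-sided ideal of the field $\kappa$: it is an additive subgroup, and for $a\in\kappa\cont R$ and $m\in\kappa\cap\m$, both $am$ and $ma$ lie in $\kappa$ (as $\kappa$ is a subring) and in $\m$ (as $\m$ is an ideal of $R$). Since $\kappa$ is a field, its only ideals are $0$ and $\kappa$, so $\kappa\cap\m$ is one of these two.

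The case $\kappa\cap\m=\kappa$ is impossible: it would give $1\in\kappa\cap\m\cont\m$, hence $\m=R$, contradicting that $\m$ is a proper (maximal) ideal. Here I use that, in the conventions of this paper, the inclusion $\kappa\cont R$ is a unital ring homomorphism, so that the identity of $\kappa$ coincides with $1\in R$. Therefore $\kappa\cap\m=0$, which yields $u=0$ and then $x=-u=0$. Hence $\ker\phi$ is trivial, $\phi$ is injective, and it identifies $\m\sdprod\kappa$ with a subring of $R$, i.e. $\m\sdprod\kappa\cont R$.

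There is no real obstacle here; the only point that needs care is the (standard, but essential) fact that a subfield of $R$ shares the identity of $R$, so that $1\in\kappa$ and the implication $1\in\m\Rightarrow\m=R$ applies. Everything else reduces to the elementary fact that a field has no nonzero proper ideals, together with $\ker\phi=\{(x,u):x+u=0\}\subseteq(\kappa\cap\m)\times(\kappa\cap\m)$.
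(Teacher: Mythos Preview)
Your proof is correct and follows essentially the same approach as the paper's: both reduce injectivity of $\phi$ to the fact that $\m\cap\kappa=\{0\}$. The only difference is cosmetic---the paper checks injectivity directly by taking two elements with the same image, while you compute the kernel---and you spell out why $\m\cap\kappa=\{0\}$ (as an ideal of a field not containing $1$), whereas the paper simply asserts it.
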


\begin{proof}    
    Let $(x,u),(y,v)\in \m\sdprod \kappa$ such that
    $$x+u=\phi(x,u)=\phi(y,v)=y+v.$$
    Then $x-y=v-u$. But $x-y\in \m$ and $v-u\in \kappa$. Since $\m\cap \kappa=\{0\}$, 
    we must have $x-y=0=v-u$. So $(x,u)=(y,v)$.
\end{proof}

\begin{theorem}\label{theorsum}
    Let $R$ be a ring, $\m\in \Max{R}$ and $\kappa\cont R$ be a field. The following properties are equivalent:
\begin{enumerate}
\item The inclusion $\phi:\m\sdprod \kappa\cont R$ is an isomorphism, which is equivalent to $\m$ and $\kappa$ exhibiting property $(\star)$ for $R$ (thanks to Theorem \ref{theorsdprod}).

\item For every element $a\in R$, there exist $x\in \m$ and $u\in \kappa$ such that $a=x+u$, which is equivalent to $R\iso \m\oplus \kappa$ as additive groups.
\end{enumerate}
\end{theorem}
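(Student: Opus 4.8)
The plan is to read off both equivalences from the fact that, by Lemma \ref{propphiinj}, the ring homomorphism $\phi\colon\m\sdprod\kappa\to R$ of Remark \ref{remphi} is \emph{always} injective, so that $\phi$ being an isomorphism amounts to $\phi$ being surjective. First I would record that, by Definition \ref{defsdprod}, the underlying additive group of $\m\sdprod\kappa$ is the direct sum $\m\oplus\kappa$, and that $\phi$ sends $(x,u)$ to $x+u$; hence, viewed additively, $\phi$ is exactly the canonical sum map $\m\oplus\kappa\to R$, and ring-surjectivity of $\phi$ coincides with group-surjectivity of this sum map. All the work then reduces to paraphrasing surjectivity.

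Next I would prove $(1)\Rightarrow(2)$: if $\phi$ is an isomorphism it is in particular surjective, so any $a\in R$ equals $\phi(x,u)=x+u$ for some $x\in\m$, $u\in\kappa$. For $(2)\Rightarrow(1)$: condition (2) says precisely that $\phi$ is surjective, and combined with the injectivity from Lemma \ref{propphiinj} this makes $\phi$ a bijective ring homomorphism, hence an isomorphism. The parenthetical equivalence in (1) with $\m$ and $\kappa$ exhibiting property $(\star)$ is then a direct appeal to Theorem \ref{theorsdprod}: part (1) of that theorem turns $R=\m\sdprod\kappa$ into property $(\star)$ for $R$, while part (2), applied to property $(\star)$ exhibited by $\m$ and $\kappa$, returns exactly an isomorphism $R\iso\m\sdprod\kappa$ in which $\lambda$ and $\rho$ act by multiplication, i.e.\ with $\phi$ as the isomorphism.

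Finally, for the additive reformulation of (2): the sum map $\m\oplus\kappa\to R$ is a homomorphism of additive groups, injective because $\m\cap\kappa=\{0\}$ (the argument used in Lemma \ref{propphiinj}), and surjective exactly when (2) holds; so (2) is equivalent to this map being an isomorphism of additive groups, which is the assertion $R\iso\m\oplus\kappa$. I do not expect any genuine obstacle here --- everything has been pre-arranged by Lemma \ref{propphiinj} and by Theorem \ref{theorsdprod}; the only point deserving a word of care is making explicit that $\phi$, read additively, is the canonical sum map, so that the ring statement (1), the element-wise statement (2) and the additive direct-sum statement are literally the same condition.
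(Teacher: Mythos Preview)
Your proposal is correct and follows essentially the same approach as the paper: both reduce the equivalence to the observation that $\phi$ is already injective by Lemma \ref{propphiinj}, so $\phi$ is an isomorphism if and only if it is surjective, and surjectivity of $(x,u)\mapsto x+u$ is literally condition (2). You spell out the two parenthetical equivalences (with property $(\star)$ via Theorem \ref{theorsdprod}, and with $R\iso\m\oplus\kappa$ as additive groups via $\m\cap\kappa=\{0\}$) more explicitly than the paper does, but the argument is the same.
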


\begin{proof}
    By Lemma \ref{propphiinj}, the ring homomorphism $\phi\colon\m\sdprod \kappa \to R$ is injective. Then $\phi$ is an isomorphism if and only if it is a surjective function, which means that every element $a\in R$ is expressible as $\phi(x,u)$ for some $(x,u)\in \m\sdprod \kappa$. By construction of $\phi$, this holds if and only if every element $a\in R$ is expressible as $x+u$ with $x\in \m$ and $u\in \kappa$.
\end{proof}

\begin{corollary}
    If $\m\in \Max{R}$ and a field $\kappa\cont R$ exhibit property $(\star)$ for a ring $R$, then $\m$ and any other field $\kappa'$ such that $\kappa\cont \kappa'\cont R$ exhibit property $(\star)$ for $R$. Hence, $\kappa$ is always a maximal field (up to isomorphism) contained in $R$ .
\end{corollary}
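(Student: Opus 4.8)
The plan is to reduce everything to the additive characterization of property $(\star)$ furnished by Theorem \ref{theorsum}, and then to observe that enlarging $\kappa$ to a subfield $\kappa'$ can only make that characterization easier to satisfy. Concretely, since $\m\in\Max{R}$ and $\kappa'$ is a subfield of $R$, Theorem \ref{theorsum} applies verbatim to the pair $(\m,\kappa')$, so it will be enough to verify condition (2) of that theorem for $\kappa'$ and then read off condition (1).

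First I would invoke the hypothesis together with condition (2) of Theorem \ref{theorsum}: since $\m$ and $\kappa$ exhibit property $(\star)$ for $R$, every $a\in R$ can be written as $a=x+u$ with $x\in\m$ and $u\in\kappa$. Because $\kappa\cont\kappa'$, we have $u\in\kappa'$, so the very same equality $a=x+u$ now exhibits $a$ as an element of $\m+\kappa'$. Thus condition (2) of Theorem \ref{theorsum} holds for $(\m,\kappa')$, and by the equivalence stated there the inclusion $\phi\colon\m\sdprod\kappa'\cont R$ is an isomorphism; that is, $\m$ and $\kappa'$ exhibit property $(\star)$ for $R$.

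For the final assertion, I would take any field $\kappa'$ with $\kappa\cont\kappa'\cont R$. By what was just shown, $\m$ and $\kappa'$ exhibit property $(\star)$, so $R/\m\iso\kappa'$; but also $R/\m\iso\kappa$ by hypothesis, whence $\kappa\iso\kappa'$ as fields. (Alternatively one may simply quote Remark \ref{remuniquenessmk}, which already records that, with $\m$ fixed, the field witnessing $(\star)$ is $R/\m$ up to isomorphism.) Hence no subfield of $R$ properly containing $\kappa$ can lie in a strictly larger isomorphism class, which is exactly the statement that $\kappa$ is a maximal field contained in $R$, up to isomorphism.

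I do not expect a genuine obstacle: the mathematical content is entirely carried by Theorem \ref{theorsum}, and the argument is a one-line containment check $u\in\kappa\cont\kappa'$. The only point demanding a little care is the phrasing of "maximal up to isomorphism": the corollary does not claim $\kappa=\kappa'$ on the nose — a strictly larger subfield that happens to be isomorphic to $\kappa$ is not excluded — only that the isomorphism type cannot grow, so the conclusion must be stated in terms of $\iso$ rather than literal equality.
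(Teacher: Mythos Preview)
Your proof is correct and follows essentially the same route as the paper's: apply the additive characterization of Theorem~\ref{theorsum} to write $a=x+u$ with $u\in\kappa\subseteq\kappa'$, conclude that $(\m,\kappa')$ also exhibits $(\star)$, and then invoke uniqueness of the witnessing field (Remark~\ref{remuniquenessmk}, equivalently $R/\m\iso\kappa$ and $R/\m\iso\kappa'$) to get $\kappa\iso\kappa'$. Your write-up is in fact slightly cleaner, as the paper's proof contains a harmless typo (it writes ``$x\in\kappa'$'' where ``$u\in\kappa'$'' is meant).
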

\begin{proof}
    Assume that $\m\in \Max{R}$ and $\kappa\cont R$ exhibit property $(\star)$ for $R$, and let $k'$ be a field such that $\kappa\cont \kappa'\cont R$. By Theorem \ref{theorsum}, every element $a\in R$ is expressible as a sum $x+u$ with $x\in \m$ and $u\in \kappa$. But notice that $x\in \kappa'$, since $\kappa\cont \kappa'$. So, by using  Theorem \ref{theorsum} again, we obtain that $\m$ and $\kappa'$ exhibit property $(\star)$ for $R$.
    By Remark \ref{remuniquenessmk}, fixed $\m$, there can only be one field up to isomorphism that exhibit property $(\star)$ for $R$. So $\kappa\iso \kappa'$. As $\kappa'$ was arbitrary, we conclude that $\kappa$ is a maximal field (up to isomorphism)  in $R$.
\end{proof}

\begin{theorem}\label{theorAB}
    The following two classes of rings satisfy property $(\star)$:
    \begin{enumerate}
        \item[(A)] Local rings $(R,\m)$ such that every invertible element of $R$ can be written as $x+u$ with $x\in \m$ and $u\in \kappa$, a fixed field $\kappa\cont R$;
        \item[(B)] Rings $R$ such that $U(R)\cont \kappa$ with $\kappa$ a subfield of $R$ and every non-invertible element of $R$ can be written as  $x+u$ with $x$ in a fixed maximal ideal $\m$ of $R$ and $u\in \kappa$.
    \end{enumerate}
\end{theorem}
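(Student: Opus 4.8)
The plan is to reduce both cases to the additive characterization of property $(\star)$ furnished by Theorem \ref{theorsum}: once we know that $\m\in\Max{R}$ and that $\kappa\cont R$ is a subfield, it suffices to check that every $a\in R$ can be written as $a=x+u$ with $x\in\m$ and $u\in\kappa$, i.e.\ that every element of $R$ is a sum of an element of $\m$ and an element of $\kappa$. So in each case I would fix an arbitrary $a\in R$ and split the argument according to whether $a$ is invertible or not.

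For class (A): here $R$ is local with maximal ideal $\m$, so in particular $\m\in\Max{R}$, and a field $\kappa\cont R$ is given. If $a$ is invertible, the hypothesis of (A) directly provides a decomposition $a=x+u$ with $x\in\m$ and $u\in\kappa$. If $a$ is not invertible, I would invoke the standard fact that in a local ring the maximal ideal coincides with the set of non-invertible elements, so that $a\in\m$ and $a=a+0$ is the required decomposition, with $a\in\m$ and $0\in\kappa$. Either way $a$ has the desired form, and Theorem \ref{theorsum} yields property $(\star)$ for $R$, exhibited by $\m$ and $\kappa$.

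For class (B): this is the dual situation. Now $\m$ is a given maximal ideal of $R$ (so $\m\in\Max{R}$), $\kappa\cont R$ is a given subfield with $U(R)\cont\kappa$, and every non-invertible element is assumed to be of the form $x+u$ with $x\in\m$, $u\in\kappa$. If $a$ is not invertible, the hypothesis gives the decomposition; if $a$ is invertible, then $a\in U(R)\cont\kappa$, so $a=0+a$ works, with $0\in\m$ and $a\in\kappa$. Again every $a\in R$ has the desired form, so Theorem \ref{theorsum} applies.

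I do not expect a genuine obstacle here: the entire content is that (A) handles the units by hypothesis and obtains the non-units for free from locality, while (B) handles the non-units by hypothesis and obtains the units for free from the inclusion $U(R)\cont\kappa$. The only point I would state explicitly is the identification, valid in any (not necessarily commutative) local ring, of the maximal ideal with the set of non-invertible elements, since this is exactly what makes the non-invertible case in (A) immediate.
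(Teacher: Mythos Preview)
Your proposal is correct and matches the paper's proof essentially verbatim: both arguments reduce to Theorem \ref{theorsum}, handle the ``free'' case in (A) via $a=a+0$ with $a\in\m$ (using that non-units lie in $\m$ for a local ring) and in (B) via $a=0+a$ with $a\in\kappa$ (using $U(R)\cont\kappa$), and invoke the respective hypotheses for the remaining case.
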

\begin{proof}
Let $(R,\m)$ be a local ring belonging to class (A). Since every non-invertible element $a$ of $R$ is in $\m$, it can be written as a sum $x+u$ with $x\in \m$ and $u\in K$ by simply taking $x=a$ and $u=0$. Moreover, by assumption, also every invertible element $a$ of $R$ can be written as $x+u$ with $x\in \m$ and $u\in K$. So, $R$ satisfies property $(\star)$ by Theorem \ref{theorsum}.

Suppose now $R$ be a ring belonging to class (B). Then, by assumption, every non-invertible element $a$ of $R$ can be written as  $x+u$ with $x\in \m$ and $u\in K$. Moreover, every invertible element $a$ of $R$ is contained in $\kappa$, and thus, it can be written as  $x+u$ with $x\in \m$ and $u\in K$ by simply taking $x=0$ and $u=a$. So, $R$ satisfies property $(\star)$ by Theorem \ref{theorsum}.
\end{proof}

\begin{remark}
$\U(R)\cont \kappa$ if and only if $\U(R)\cup \{0\}=\kappa$. Indeed, all non-zero elements of the field $\kappa$ are invertible.
\end{remark}

The following proposition shows that the classes (A) and (B) are essentially disjoint.

\begin{proposition}\label{propAB}
    The  rings that belong to both classes (A) and (B) are only  the fields. 
\end{proposition}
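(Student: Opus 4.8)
The plan is to show both inclusions: every field belongs to both classes (A) and (B), and conversely any ring $R$ lying in both classes must be a field. The first direction is immediate: if $R=\kappa$ is a field, then its unique maximal ideal is $\m=\{0\}$, so every element $a\in R$ is trivially of the form $x+u$ with $x=0\in\m$ and $u=a\in\kappa$; moreover $\U(R)=\kappa\setminus\{0\}\cont\kappa$. Hence $R$ satisfies the defining conditions of (A) (vacuously, since $\m=\{0\}$) and of (B) (since all non-invertible elements, namely just $0$, lie in $\m$, and $\U(R)\cont\kappa$). Thus every field belongs to both classes.

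For the converse, suppose $R$ belongs to both (A) and (B). Membership in (A) forces $R$ to be local, say with maximal ideal $\m$, and to contain a field $\kappa$. Membership in (B) gives $\U(R)\cont\kappa$, i.e. every invertible element of $R$ lies in the subfield $\kappa$. Since $R$ is local, every element of $R$ is either in $\m$ or invertible, i.e. $R=\m\cup\U(R)\cont\m\cup\kappa$. The key observation is then that $\m$ must be the zero ideal. Indeed, take any nonzero $x\in\m$. Pick any nonzero $u\in\kappa$ (which exists since $\kappa$ is a field); then $u$ is a unit in $R$, and since $x\in\m$, the element $u+x$ is also a unit of $R$ (the standard fact that a unit plus an element of the Jacobson radical is a unit). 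By the (B) condition, $u+x\in\kappa$, hence $x=(u+x)-u\in\kappa$. So $x\in\m\cap\kappa=\{0\}$, contradicting $x\neq 0$. Therefore $\m=\{0\}$, and $R$ is a local ring with zero maximal ideal, i.e. $R$ is a field.

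I do not anticipate a serious obstacle here; the argument is short. The one point that needs a little care is the claim that a unit plus an element of the maximal ideal of a local ring is again a unit — this is the characterization of the Jacobson radical, and in a local ring $\m$ is the Jacobson radical, so it applies. (Alternatively one can argue directly: if $u+x$ were non-invertible it would lie in $\m$, forcing $u=(u+x)-x\in\m$, contradicting that $u$ is a unit.) This direct alternative is in fact cleaner and avoids invoking the Jacobson radical at all: from $R=\m\cup\U(R)$ and $\U(R)\cont\kappa$, if some nonzero $x\in\m$ existed, then for nonzero $u\in\kappa$ the sum $u+x$ is not in $\m$ (else $u\in\m$), so $u+x\in\U(R)\cont\kappa$, whence $x\in\kappa$, contradiction. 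So $\m=0$ and $R=\kappa$ is a field.
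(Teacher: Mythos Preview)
Your proof is correct and follows essentially the same approach as the paper's: both show $\m=\{0\}$ by taking a hypothetical nonzero $x\in\m$, forming $x+u$ for some unit $u\in\kappa$ (the paper simply takes $u=1$), and deriving a contradiction in each of the two cases $x+u\in\m$ or $x+u\in\U(R)\cont\kappa$. The only difference is that you spell out $\m\cap\kappa=\{0\}$ explicitly, while the paper leaves it implicit.
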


\begin{proof}
It is trivial to check that every field is in class (A) and in class (B).  Let us suppose now $R$ be a ring that is in class (A) and in class (B), and let $\m$ be the unique maximal ideal of $R$. Consider an element $a\in \m$and suppose $a\neq 0$.  Observe that $a+1$ is either invertible, or non-invertible in $R$. But if $a+1$ is invertible, then by (B), it must be $a+1\in \kappa$, which implies that $a\in \kappa$, a contradiction. If $a+1$ is not invertible, it must be $a+1\in \m$, which implies that $1\in \m$, once again, a contradiction. Hence, we must have $a=0$, and so $\m=\{0\}$, which implies that $R$ is a field.
 \end{proof}

\begin{proposition}\label{expr}
All equicharacteristic complete local Noetherian commutative rings belong to class~(A). 
\end{proposition}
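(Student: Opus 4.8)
The plan is to feed the ring straight into Cohen's structure theorem and read off the decomposition required by class~(A). Let $(A,\m)$ be an equicharacteristic complete local Noetherian commutative ring. By Theorem~\ref{C46}, $A$ admits a field of representatives: a subfield $\kappa\cont A$ such that the restriction of the canonical projection $\pi\colon A\to A/\m$ to $\kappa$ is an isomorphism $\kappa\to A/\m$. Fix this $\kappa$; it will play the role of the fixed field $\kappa\cont R$ in the definition of class~(A), and in particular $\pi|_\kappa$ is surjective.

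Next I would verify the defining condition of class~(A) for this choice of $\kappa$. Let $a\in\U(A)$. Since $\pi|_\kappa$ is surjective, there is $u\in\kappa$ with $\pi(u)=\pi(a)$, i.e.\ $a-u\in\ker\pi=\m$. Putting $x:=a-u\in\m$ gives $a=x+u$ with $x\in\m$ and $u\in\kappa$, which is exactly what membership in class~(A) demands. (Here $u\neq 0$, since otherwise $a=x\in\m$ would contradict $a\in\U(A)$, so in fact $u\in\kappa^*$.) Hence $A$ lies in class~(A), and therefore satisfies property~$(\star)$ by Theorem~\ref{theorAB}; this is precisely what realizes Cohen's structure theorem as a special case of our framework.

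I do not expect any genuine obstacle here: the whole weight of the argument is the existence of the field of representatives, which Theorem~\ref{C46} hands us, and the decomposition $a=(a-u)+u$ is an immediate consequence of the surjectivity of $\pi|_\kappa$ onto the residue field. In fact the same one-line computation applies verbatim to an arbitrary element of $A$ (taking $x=a$, $u=0$ when $a$ is non-invertible, or simply reusing surjectivity), so $A$ even satisfies the hypothesis of Theorem~\ref{theorsum} directly; but since the proposition only asserts membership in class~(A), restricting attention to invertible elements is all that is needed.
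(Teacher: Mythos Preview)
Your proof is correct and takes essentially the same approach as the paper: invoke Cohen's structure theorem to obtain a field of representatives $\kappa\cont A$, then decompose any element (in particular any unit) as $a=(a-u)+u$ with $u\in\kappa$ mapping to $\pi(a)$. The only cosmetic difference is that the paper routes this decomposition through Theorem~\ref{theorsum} (noting that a field of representatives gives property~$(\star)$, hence $R\iso\m\oplus\kappa$ as abelian groups), whereas you perform the one-line computation directly; the content is identical.
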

\begin{proof}
    Let $R$ be an equicharacteristic complete local Noetherian commutative ring. By Cohen structure theorem (Theorem \ref{C46}), $R$ satisfies property $(\star)$, exhibited by some $\m\in \Max{R}$ and a field $\kappa\cont R$. By Theorem \ref{theorsum}, every element $a\in R$ is expressible as a sum $x+u$ with $x\in \m$ and $u\in \kappa$. In particular, this holds for all elements $a\in \operatorname{U}(R)$. Moreover $A$ is in particular local.
\end{proof}

\begin{remark}
Thanks to Example \ref{exlockx} and Proposition \ref{expr}, we will see that the set of (A) type rings is strictly larger than the set of rings that satisfy Cohen structure theorem's assumptions.

The set of (A) type rings is also strictly larger than the set of rings that satisfy the assumptions of Nagata's generalization of Cohen structure theorem (\cite{Nag50}). Indeed, this holds by the same argument of the proof of Proposition \ref{expr}.
\end{remark}

\begin{remark}
    For a given local ring 
    $R$, it is generally much easier to verify whether it belongs to class (A) than to determine whether it is complete. More broadly, our results provide a simpler and more efficient method for checking whether a given ring satisfies property  $(\star)$.
\end{remark}

\begin{remark}
    The classes (A) and (B) are not exhaustive. Example \ref{exfunctions} shows that $\operatorname{Hom}(X,\kappa)$ satisfies property $(\star)$ without being in (A) or in (B). Indeed, it is not local (as long as $X$ is not a singleton set), as $\m_a$ is a maximal ideal for every $a\in X$. And every function which does not vanish anywhere is invertible, so that $\operatorname{Hom}(X,\kappa)$ has far more invertible functions than the non-zero constant functions.
    However, most of our examples of rings that satisfy property $(\star)$ are in class (A) or class (B), and it is quite easy to check if a ring belongs to class (A) or class (B). In fact, it is quite easy to check if a ring satisfies condition (2) of Theorem \ref{theorsum}, to understand whether it satisfies property $(\star)$ or not. 
\end{remark}

\section{Applications and further examples}

In this section, we present some  applications of the results we proved in Section \ref{secmainresults}. Moreover, we present some methods to construct examples of rings that satisfy property $(\star)$. One of such methods is given by localizing rings that satisfy property $(\star)$. Another one is an application of Theorem \ref{theorsdprod}, which says that all semidirect products of a not necessarily unitary ring and a field are rings that satisfy property $(\star)$. In fact, the following general result holds, which says that we can take semidirect products with any ring $R$ that satisfies property $(\star)$, rather than just with fields.

\begin{proposition}\label{propsr}
    Let $R$ be a ring that satisfies property $(\star)$, exhibited by $\m\in \Max{R}$ and a field $\kappa\cont R$. Then for every ring $S$ and for every $\lambda$ and $\rho$ as in Definition \ref{defsdprod} (so any $S$ such that a semidirect product with $R$ exists), we have that $S\sdprod[\lambda,\rho] R$ satisfies property $(\star)$. More precisely, property $(\star)$ for $S\sdprod[\lambda,\rho] R$ is exhibited by $S\sdprod[\lambda',\rho'] \m$, with $\lambda'$ and $\rho'$ given by the restrictions of $\lambda$ and $\rho$ to $\m$, and the same field $\kappa$.
\end{proposition}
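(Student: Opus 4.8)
The plan is to verify property $(\star)$ for $T:=S\sdprod[\lambda,\rho]R$ by way of Theorem \ref{theorsum}, with candidate maximal ideal $\m':=S\sdprod[\lambda',\rho']\m$ (where $\lambda'=\lambda|_\m$ and $\rho'=\rho|_\m$) and the same field $\kappa$. First I would record the easy structural facts. Since $R$ is unitary, Remark \ref{remlambdarho} makes the inclusion $R\to T$, $a\mapsto(0,a)$, a ring homomorphism, so $\kappa\cont R\cont T$ and $\kappa$ is a subfield of $T$. The restrictions $\lambda'$ and $\rho'$ inherit the axioms of Definition \ref{defsdprod} from $\lambda,\rho$ (these are conditions on all of $\lambda,\rho$, and $\m$ is a subring of $R$), so the not necessarily unitary ring $S\sdprod[\lambda',\rho']\m$ is defined; its underlying group is $S\oplus\m$, and since the multiplication formula of Definition \ref{defsdprod} only ever evaluates $\lambda,\rho$ on second components, that formula applied to elements of $S\oplus\m$ agrees with the one for $T$. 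Thus $S\sdprod[\lambda',\rho']\m$ may be identified with the subring $S\oplus\m$ of $T$, which is what I shall mean by $\m'$.

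Next I would show in one stroke that $\m'$ is a maximal ideal of $T$ with $T/\m'\iso\kappa$. Consider the map $\Pi\colon T\to R/\m$ sending $(s,a)\mapsto[a]$. Since $\lambda,\rho$ affect only the first component of a product in $T$, the map $\Pi$ is a ring homomorphism; it sends $1_T=(0,1_R)$ to $[1_R]$; it is surjective because the canonical projection $\pi\colon R\to R/\m$ is; and $\ker\Pi=\{(s,a)\in T:a\in\m\}=S\oplus\m=\m'$. Hence $\m'$ is a two-sided ideal of $T$, and by the first isomorphism theorem $T/\m'\iso R/\m\iso\kappa$; since $\kappa$ is a field, $\m'\in\Max{T}$.

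It now suffices, by Theorem \ref{theorsum}, to write every element of $T$ as the sum of an element of $\m'$ and an element of $\kappa$. Given $(s,a)\in T$, use that $R$ has property $(\star)$ via $\m$ and $\kappa$: by Theorem \ref{theorsum} applied to $R$, $a=x+u$ with $x\in\m$ and $u\in\kappa$. Then $(s,a)=(s,x)+(0,u)$, where $(s,x)\in\m'$ and $(0,u)$ lies in the copy of $\kappa$ in $T$. So by Theorem \ref{theorsum} applied to $T$ (legitimate now that $\m'\in\Max{T}$ and $\kappa\cont T$), the pair $\m',\kappa$ exhibits property $(\star)$ for $T$, as claimed. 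I do not expect a real obstacle here; the only points needing care are the routine verifications that $\Pi$ is a ring homomorphism with kernel exactly $\m'$ and that the semidirect product structure on $S\oplus\m$ is the restriction of that of $T$ — both of which come down to $\m$ being an ideal of $R$ and to the first component of a semidirect product multiplication being irrelevant for $\Pi$. The conceptual content is just that the outer semidirect product with $S$ leaves the field $\kappa$ and the associated section of $R\to R/\m$ untouched.
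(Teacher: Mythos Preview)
Your proof is correct and takes a genuinely different route from the paper's. The paper proceeds by establishing an ``associativity'' isomorphism
\[
S\sdprod[\lambda,\rho] R \;\iso\; S\sdprod[\lambda,\rho](\m\sdprod\kappa)\;\iso\;(S\sdprod[\lambda',\rho']\m)\sdprod[\lambda'',\rho'']\kappa,
\]
explicitly constructing the maps $\lambda'',\rho''$ on $\kappa$ and checking that the two iterated semidirect products have matching multiplications, and then invoking Theorem~\ref{theorsdprod}. You instead work directly inside $T=S\sdprod[\lambda,\rho]R$: you identify $S\sdprod[\lambda',\rho']\m$ with the subset $S\oplus\m$, exhibit it as the kernel of the surjective ring map $\Pi\colon T\to R/\m$, $(s,a)\mapsto [a]$, to see at once that it is a maximal ideal with quotient $\kappa$, and then verify the additive decomposition criterion of Theorem~\ref{theorsum} by splitting the $R$-component. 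Your argument is shorter and avoids introducing and checking the auxiliary actions $\lambda'',\rho''$; the paper's argument, on the other hand, records the structural fact that these semidirect products associate, which is of some independent interest and makes the identification $T\iso(S\sdprod[\lambda',\rho']\m)\sdprod\kappa$ completely explicit.
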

\begin{proof}
    Since $R$ satisfies property $(\star)$, by Theorem \ref{theorsdprod}, $R\iso \m\sdprod \kappa$. We prove that
    $$S\sdprod[\lambda,\rho] R\iso S\sdprod[\lambda,\rho] (\m\sdprod \kappa)\iso (S\sdprod[\lambda',\rho'] \m)\sdprod[\lambda'',\rho''] \kappa,$$
    for some $\lambda',\rho',\lambda'',\rho''$. We define $\lambda'$ and $\rho'$ to be the restrictions of $\lambda$ and $\rho$ to $\m\cont R$. Then we immediately see that $\lambda'$ and $\rho'$ satisfy all the conditions of Definition \ref{defsdprod} to ensure that we can construct the semidirect product $S\sdprod[\lambda',\rho'] \m$. Consider now $\ell'$ and $r'$ the restrictions of $\lambda$ and $\rho$ to $\kappa\cont R$. We define
    \[\lambda''\colon\kappa\to \End[\Mod{(S\sdprod[\lambda',\rho'] \m)}]{(S\sdprod[\lambda',\rho'] \m)_{(S\sdprod[\lambda',\rho'] \m)}}\vspace*{0.2ex}\]
    by sending $u\in \kappa$ to the endomorphism
    $$(x,y)\in S\sdprod[\lambda',\rho'] \m \mapsto (\ell'(u)(x),u\cdot_R y)=(\lambda(u)(x),u\cdot_R y)\in S\sdprod[\lambda',\rho'] \m.$$
    It is straightforward to show that such endomorphism is linear and that $\lambda$ is a ring homomorphism, using that $\lambda',\rho'$ as well as $\ell',r'$ are restrictions of $\lambda,\rho$ and that $\lambda,\rho$ satisfy the conditions of Definition \ref{defsdprod}. Analogously, we define 
    \[\rho''\colon\kappa\to \End[\Mod{(S\sdprod[\lambda',\rho'] \m)}]{_{(S\sdprod[\lambda',\rho'] \m)}(S\sdprod[\lambda',\rho'] \m)}\vspace*{0.2ex}\]
    by sending $u\in \kappa$ to the endomorphism
    $$(x,y)\in S\sdprod[\lambda',\rho'] \m \mapsto (r'(u)(x),y\cdot_R u)=(\rho(u)(x),y\cdot_R u)\in S\sdprod[\lambda',\rho'] \m,$$
    and $\rho''$ is an anti-homomorphism. It is  straightforward to show that the coherence conditions of Definition \ref{defsdprod} hold. We can thus construct the semidirect product $(S\sdprod[\lambda',\rho'] \m)\sdprod[\lambda'',\rho''] \kappa$.
It remains to prove that \[S\sdprod[\lambda,\rho] (\m\sdprod \kappa)\iso (S\sdprod[\lambda',\rho'] \m)\sdprod[\lambda'',\rho''] \kappa.\] Of course, their additive groups are isomorphic. A straightforward calculation also shows that  the multiplications coincide, so that we have a ring isomorphism.
By Theorem \ref{theorsdprod}, we conclude that $S\sdprod[\lambda',\rho'] \m\in \Max{S\sdprod[\lambda,\rho] R}$ and $\kappa\cont S\sdprod[\lambda,\rho] R$ exhibit property $(\star)$ for $S\sdprod[\lambda,\rho] R$.
\end{proof}

\begin{corollary}\label{corollsdprod}
    Given any $\kappa$-algebra $A$ with $\kappa$ a field, the semidirect product $A\sdprod[\lambda,\rho] \kappa$ (with $\lambda$ and $\rho$ given by the $\kappa$-algebra structure of $A$) satisfies property $(\star)$.
In particular, $\kappa\sdprod \kappa$ always satisfy property $(\star)$. We denote $\kappa\sdprod \kappa$ without specifying $\lambda$ and $\rho$, which are simply given by multiplication in $\kappa$. 
\end{corollary}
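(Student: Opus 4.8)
The plan is to read off this corollary from Proposition \ref{propsr}, or even more directly from part~(1) of Theorem \ref{theorsdprod}.

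For the most direct route, note that a $\kappa$-algebra $A$ is in particular a ring, which we may simply regard as a not necessarily unitary ring for the purpose of the semidirect-product construction, and that taking $\lambda$ and $\rho$ to be ``given by the $\kappa$-algebra structure of $A$'' means $\lambda(u)(x)=u\cdot x$ and $\rho(u)(x)=x\cdot u$ for $u\in\kappa$ and $x\in A$. First I would check that these $\lambda,\rho$ satisfy the hypotheses of Definition \ref{defsdprod}: that $\lambda$ is a ring homomorphism into $\End[\Mod{A}]{A_A}$, that $\rho$ is a ring anti-homomorphism into $\End[\Mod{A}]{_A A}$, and that the two compatibility axioms hold. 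Each of these is immediate from associativity of the multiplication of $A$ (together with the fact that the inclusion $\kappa\cont A$ preserves $1$, which gives $\lambda(1)=\rho(1)=\operatorname{id}$). Hence $A\sdprod[\lambda,\rho]\kappa$ is a well-defined semidirect product of a not necessarily unitary ring and a field, so part~(1) of Theorem \ref{theorsdprod} applies and gives that $A\sdprod[\lambda,\rho]\kappa$ satisfies property $(\star)$, exhibited by the copy of $A$ as the maximal ideal and the copy of $\kappa$ as the field.

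Alternatively, one can apply Proposition \ref{propsr} with $R:=\kappa$ and $S:=A$. Here $\kappa$, being a field, satisfies property $(\star)$, exhibited by $\m:=\{0\}\in\Max{\kappa}$ and the subfield $\kappa\cont\kappa$; Proposition \ref{propsr} then produces property $(\star)$ for $A\sdprod[\lambda,\rho]\kappa$, exhibited by $A\sdprod[\lambda',\rho']\{0\}$ and the same field $\kappa$, where $\lambda',\rho'$ are the (trivial) restrictions of $\lambda,\rho$ to $\{0\}$, so that $A\sdprod[\lambda',\rho']\{0\}$ is nothing but the copy of $A$.

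The ``in particular'' assertion is then the special case $A:=\kappa$, where $\kappa$ carries the evident $\kappa$-algebra structure given by its own multiplication, so that $\lambda$ and $\rho$ are both multiplication in $\kappa$. I do not expect any genuine obstacle: the corollary is essentially a dictionary translation of the two cited results, and the only steps requiring (routine) care are the verification that the $\lambda,\rho$ coming from the $\kappa$-algebra structure meet the requirements of Definition \ref{defsdprod}, and the identification of $A\sdprod[\lambda',\rho']\{0\}$ with the copy of $A$.
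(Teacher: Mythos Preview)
Your proposal is correct and matches the paper's own proof essentially line for line: the paper likewise notes that the $\kappa$-algebra structure (viewed as an inclusion $\kappa\hookrightarrow A$) furnishes $\lambda,\rho$ by multiplication in $A$, and then invokes Proposition~\ref{propsr} (or equivalently Theorem~\ref{theorsdprod}) with $R=\kappa$, which trivially has property~$(\star)$. Your added verification that $\lambda,\rho$ satisfy the axioms of Definition~\ref{defsdprod} and your identification of $A\sdprod[\lambda',\rho']\{0\}$ with the copy of $A$ are routine details the paper leaves implicit.
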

\begin{proof}
    Given any $\kappa$-algebra $A$ with $\kappa$ a field, we can define $\lambda$ and $\rho$ to be given by the $\kappa$-algebra structure of $A$, so that we can construct the semidirect product $A\sdprod[\lambda,\rho] \kappa$. Viewing the $\kappa$-algebra structure as given by an injective homomorphism $\kappa\to A$, we get that $\lambda$ and $\rho$ are simply given by multiplication in $A$. Since $\kappa$ has property $(\star)$, being a field, by Proposition \ref{propsr} (or Theorem \ref{theorsdprod}), we obtain that $A\sdprod[\lambda,\rho] \kappa$ has property $(\star)$.
\end{proof}

\begin{remark}\label{remkk}
    We describe in detail the structure of $\kappa\sdprod \kappa$ with $\kappa$ a field. By Corollary \ref{corollsdprod}, this is a class of examples of rings that satisfy property $(\star)$. The additive group of $\kappa\sdprod \kappa$ is $\kappa\oplus \kappa$. The multiplication is given by
    $$(a,b)\cdot (c,d)=(a\cdot c+b\cdot c+a\cdot d,b\cdot d).$$
    The ideal $\set{(x,0)}{x\in \kappa}\in \Max{\kappa\sdprod \kappa}$ and the field $\kappa\iso \set{(0,y)}{y\in \kappa}\cont \kappa\sdprod \kappa$ exhibit property $(\star)$ for $\kappa\sdprod \kappa$.
We now characterize the invertible elements $(a,b)$ of $\kappa\sdprod \kappa$. In order to have
    $$(a,b)\cdot (c,d)=(a\cdot c+b\cdot c+a\cdot d,b\cdot d)=(0,1),$$
    we need to have $b\neq 0$, so that we can take $d=b^{-1}$, and either $a=0$ or $a\neq -b$, so that we can take $c=-(a+b)^{-1}\cdot a\cdot b^{-1}$. If $b\neq 0$ and $a=0$, then $(c,b^{-1})\cdot (0,b)=(0,1)$, which forces $c=0$. If $b\neq 0$, $a\neq 0$, and $a\neq -b$, a calculation shows that also $(-(a+b)^{-1}\cdot a\cdot b^{-1},b^{-1})\cdot (a,b)=(0,1)$. We conclude that
    $$U(\kappa\sdprod \kappa)=\set{(a,b)\in \kappa\sdprod \kappa}{b\neq 0 \text{ and either } a=0 \text{ or } a\neq -b}.$$
\end{remark}

\begin{proposition}\label{propnotlocal}
     If $B$ is a unitary ring with $1\neq 0$ and $\kappa$ is a field, then $B\sdprod[\lambda,\rho] \kappa$ is not local, for any choice of $\lambda$ and $\rho$ as in Definition \ref{defsdprod}. $B\sdprod[\lambda,\rho] \kappa$ is then a non-local ring that satisfies property $(\star)$, 
\end{proposition}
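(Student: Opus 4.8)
The plan is to show that $B\sdprod[\lambda,\rho]\kappa$ always has at least two distinct maximal ideals, which contradicts locality. One maximal ideal is already known: by Theorem \ref{theorsdprod}(1), the copy of $B$ sitting inside $B\sdprod[\lambda,\rho]\kappa$ as $\set{(x,0)}{x\in B}$ is a maximal ideal, with quotient $\kappa$. So it suffices to produce a second maximal ideal distinct from this one. The natural candidate is a maximal ideal containing the element $(1,-1)$, where $1$ is the identity of $B$ and $-1$ is the negative of the identity of $\kappa$; this element is nonzero precisely because $1\neq 0$ in $B$, and it does \emph{not} lie in the copy of $B$ (its second coordinate is $-1\neq 0$ in $\kappa$).

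The key step is to check that $(1,-1)$ is not a unit of $B\sdprod[\lambda,\rho]\kappa$. Using the semidirect product multiplication, $(1,-1)\cdot(c,t)=(1\cdot c+\lambda(-1)(c)+\rho(t)(1),\,(-1)\cdot t)=(c-c+\rho(t)(1),-t)=(\rho(t)(1),-t)$; to get $1=(0,1)$ we would need $-t=1$, i.e. $t=-1$, and then $\rho(-1)(1)=0$. But $\rho(-1)=-\rho(1)=-\operatorname{id}$ by Remark \ref{remlambdarho} (we assume $\rho$ preserves $1$), so $\rho(-1)(1)=-1\neq 0$ in $B$. A symmetric computation handles the left factor $(c,t)\cdot(1,-1)$. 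Hence $(1,-1)$ is a non-unit, so it lies in some maximal ideal $\m'$. Since $(1,-1)\notin \set{(x,0)}{x\in B}$, the maximal ideal $\m'$ is different from the known maximal ideal $B$, so $B\sdprod[\lambda,\rho]\kappa$ has at least two maximal ideals and is therefore not local. The final sentence of the statement then follows immediately from Theorem \ref{theorsdprod}(1), which already guarantees that $B\sdprod[\lambda,\rho]\kappa$ satisfies property $(\star)$.

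The main obstacle, such as it is, is purely bookkeeping: one must be careful that $\lambda(-1)$ and $\rho(-1)$ act as $-\operatorname{id}$, which relies on the standing assumption (Remark \ref{remlambdarho}) that $\lambda$ and $\rho$ preserve $1$, and one must verify the non-unit condition on both sides of the product since $B\sdprod[\lambda,\rho]\kappa$ need not be commutative. An alternative, perhaps cleaner route avoiding any computation: if $B\sdprod[\lambda,\rho]\kappa$ were local with maximal ideal $\n$, then since $B$ is a maximal ideal we would have $\n=B$; but then every element outside $B$ would be a unit, in particular $(1,-1)$ would be a unit, and the same one-line computation above gives a contradiction. I would present whichever of these two phrasings is shortest in context.
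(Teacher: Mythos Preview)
Your proposal is correct and follows essentially the same approach as the paper: the paper exhibits the non-unit $(-1,1)\notin B\times\{0\}$ (where you use $(1,-1)$, a cosmetic difference) via the identical computation $(-1,1)\cdot(x,1)=(-x+\lambda(1)(x)+\rho(1)(-1),1)=(-1,1)\neq(0,1)$, and concludes non-locality in exactly the way you sketch in your ``alternative route''. Note that checking only one-sided invertibility already suffices, since a two-sided unit is in particular a right unit; the paper accordingly checks only one side.
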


\begin{proof}
    By Theorem \ref{theorsdprod}, $\set{(x,u)\in B\sdprod[\lambda,\rho] \kappa}{u=0}$ is a maximal ideal in $B\sdprod[\lambda,\rho] \kappa$. But $(-1,1)$ is not invertible, although it does not belong to such maximal ideal. Indeed, $(-1,1)\cdot (x,u)=(0,1)$ would require in particular $u=1$. But
    $$(-1,1)\cdot (x,1)=(-x+\lambda(1)(x)+\rho(1)(-1),1)=(-x+x-1,1)=(-1,1)\neq (0,1).$$
    Thus $(-1,1)$ is not invertible.
\end{proof}

The following examples are based on  Theorem \ref{theorsdprod}, Proposition \ref{propsr}, and Corollary \ref{corollsdprod}.
    
\begin{example}
    Consider the semidirect product $R:=(\mathds{Z}/2\mathds{Z})\sdprod[\lambda, \rho] (\mathds{Z}/2\mathds{Z})$, where $\lambda$ and $\rho$ are given by multiplication in $\mathds{Z}/2\mathds{Z}$. This coincides with $\lambda$ and $\rho$ being defined by sending $1\in \mathds{Z}/2\mathds{Z}$ to the identity endomorphism of $\mathds{Z}/2\mathds{Z}$. The additive structure of  $R$ is that of $(\mathds{Z}/2\mathds{Z})\times (\mathds{Z}/2\mathds{Z})=\{(0,0),(0,1),(1,0),(1,1)\}$, whereas, the multiplicative structure is described by the following table:
\begin{table}[h!]

\centering
\begin{tabular}{c | c c c c}
& (0,0) & (0,1) & (1,0) & (1,1)\\
\hline 
(0,0) & (0,0) & (0,0) & (0,0) & (0,0)\\
(0,1) & (0,0) & (0,1) & (1,0) & (1,1)\\
(1,0) & (0,0) & (1,0) & (1,0) & (0,0)\\
(1,1) & (0,0) & (1,1) & (0,0) & (1,1)\\
\end{tabular}
\end{table}

Recall from Definition \ref{defsdprod} that the unit for this multiplication is the pair $(0,1)$. By Corollary \ref{corollsdprod} (or Theorem \ref{theorsdprod}), $(\mathds{Z}/2\mathds{Z})\sdprod[\lambda, \rho] (\mathds{Z}/2\mathds{Z})$ satisfies property $(\star)$. This is exhibited by the maximal ideal $\langle (1,0)\rangle=\{(0,0),(1,0)\}$ and the field \[\kappa=\{(0,0), (0,1)\}\iso \mathds{Z}/2\mathds{Z}\] contained in $R$. 
We observe that $R$ is not local since the ideal $\langle (1,1)\rangle=\{(1,1), (0,0)\}$ is also maximal. 
Notice that the only invertible element of $R$ is the unit $(0,1)\in \kappa$, and hence, $R$ belongs to class (B) of Theorem \ref{theorAB}.
\end{example}

\begin{example}
    Consider now the semidirect product $(\mathds{Z}/3\mathds{Z})\sdprod (\mathds{Z}/3\mathds{Z})$, where $\lambda$ and $\rho$ are given by multiplication in $\mathds{Z}/3\mathds{Z}$. By Corollary \ref{corollsdprod}, this ring satisfies property $(\star)$. However, by Proposition \ref{propnotlocal}, $(\mathds{Z}/3\mathds{Z})\sdprod (\mathds{Z}/3\mathds{Z})$ is not local. By Remark \ref{remkk}, its invertible elements are
    \[U((\mathds{Z}/3\mathds{Z})\sdprod (\mathds{Z}/3\mathds{Z}))=\{(0,1),(0,2),(1,1),(2,2)\},\]
    whence $U((\mathds{Z}/3\mathds{Z})\sdprod (\mathds{Z}/3\mathds{Z}))\not \subseteq \mathds{Z}/3\mathds{Z}$. We conclude that $(\mathds{Z}/3\mathds{Z})\sdprod (\mathds{Z}/3\mathds{Z})$ is a ring that satisfies property $(\star)$ without being in class (A) or in class (B).
\end{example}

The following is an example in which the semidirect product maps $\lambda$ and $\rho$ are not given by a $\kappa$-algebra structure.

\begin{example}
 Consider the semidirect product $R:=2(\mathds{Z}/4\mathds{Z})\sdprod[\lambda,\rho] (\mathds{Z}/2\mathds{Z})$, where $\lambda$ and $\rho$ are defined by sending $1\in \mathds{Z}/2\mathds{Z}$ to the identity endomorphism of $2(\mathds{Z}/4\mathds{Z})$. This is well defined since the period of such identity endomorphism is $2$, which divides the period of $1$ in $\mathds{Z}/2\mathds{Z}$. Notice that $2(\mathds{Z}/4\mathds{Z})$ is not a $\mathds{Z}/2\mathds{Z}$-algebra since there are no injections of $\mathds{Z}/2\mathds{Z}$ into $2(\mathds{Z}/4\mathds{Z})$. So $\lambda$ and $\rho$ are not given by an algebra structure. The additive structure of  $R$ is that of $2(\mathds{Z}/4\mathds{Z})\times (\mathds{Z}/2\mathds{Z})=\{(0,0), (0,1), (2,0), (2,1)\}$, whereas the multiplicative structure is described by the table below.
\begin{table}[h!]
\centering
\begin{tabular}{c | c c c c}
& (0,0) & (0,1) & (2,0) & (2,1)\\
\hline 
(0,0) & (0,0) & (0,0) & (0,0) & (0,0)\\
(0,1) & (0,0) & (0,1) & (2,0) & (2,1)\\
(2,0) & (0,0) & (2,0) & (0,0) & (2,0)\\
(2,1) & (0,0) & (2,1) & (2,0) & (0,1)\\
\end{tabular}
\end{table}

\noindent By Proposition \ref{propsr}, $2(\mathds{Z}/4\mathds{Z})\sdprod[\lambda,\rho] (\mathds{Z}/2\mathds{Z})$ satisfies property $(\star)$. This is exhibited by the maximal ideal $\langle (2,0)\rangle=\{(0,0),(2,0)\}$ and the field $\kappa=\{(0,0),(0,1)\}\iso \mathds{Z}/2\mathds{Z}$ contained in $R$. Notice that $2(\mathds{Z}/4\mathds{Z})\sdprod[\lambda,\rho] (\mathds{Z}/2\mathds{Z})$ is local since the maximal ideal $\m$ contains all the non-invertible elements. 
\end{example}

\begin{remark}
The only possible semidirect products of the form $\mathds{Z}/n \mathds{Z}\sdprod[\lambda,\rho] \mathds{Z}/p \mathds{Z}$ with $n\in \mathds{Z}_+$ and $p$ a prime number are the ones with $n=p$, where $\lambda$ and $\rho$ are simply given by multiplication in $\mathds{Z}/p \mathds{Z}$ (or the trivial ones $0\sdprod \mathds{Z}/p \mathds{Z}$). Indeed, in order to define a (unitary) semidirect product, we need a ring homomorphism \[\lambda\colon\mathds{Z}/p \mathds{Z}\to \End[\Mod{\mathds{Z}/n \mathds{Z}}]{\mathds{Z}/n \mathds{Z}_{\mathds{Z}/n \mathds{Z}}}\] that preserve 1. So, we need to send 1 to the identity endomorphism. But then the period of the identity endomorphism must divide the period of 1 in $\mathds{Z}/p \mathds{Z}$, which means that $n$ divides $p$. There are then two possibilities: either $n=1$ and $\mathds{Z}/n \mathds{Z}=\{0\}$ or $n=p$. When $n=p$, sending $1$ to the identity endomorphism, then it completely determines $\lambda$, since $\mathds{Z}/p \mathds{Z}$ is cyclic. It is easy to see that such $\lambda$ is given by taking multiplications in $\mathds{Z}/p \mathds{Z}$, and analogously for $\rho$.
\end{remark}

The following result yields numerous examples of rings that satisfy property $(\star)$. 

\begin{proposition}\label{proplochasstar}
Let $R$ be a ring that satisfies property $(\star)$, exhibited by $\m\in \Max{R}$ and a field $\kappa\cont R$. Assume then that $R\setminus \m$ does not contain any zero divisors 
Then the localization $R_\m$ of $R$ at $\m$ satisfies property $(\star)$. More precisely, $\m R_\m$ is the only maximal ideal of $R_\m$ and 
$$R_\m/(\m R_\m)\iso \kappa\cont R\cont R_\m.$$
\end{proposition}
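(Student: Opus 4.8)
The plan is to verify property $(\star)$ for $R_\m$ by checking condition (2) of Theorem~\ref{theorsum}: namely that $\m R_\m$ is a maximal ideal of $R_\m$, that $\kappa$ embeds as a subfield of $R_\m$, and that every element of $R_\m$ can be written as $x + u$ with $x \in \m R_\m$ and $u \in \kappa$. First I would record the standard facts about localization at a prime: since $\m$ is maximal (hence prime) and, by hypothesis, $R \setminus \m$ contains no zero divisors, the localization map $R \to R_\m$ is injective, so $\kappa \cont R \cont R_\m$ gives a copy of $\kappa$ inside $R_\m$; and $\m R_\m$ is the unique maximal ideal of $R_\m$, with $R_\m / (\m R_\m) \iso \operatorname{Frac}(R/\m) = R/\m$ since $R/\m$ is already a field. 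Combined with $R/\m \iso \kappa$ from property $(\star)$ for $R$, this already yields $R_\m/(\m R_\m) \iso \kappa$.

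The substantive step is to show every $\frac{a}{s} \in R_\m$ (with $a \in R$, $s \in R \setminus \m$) decomposes as required. By Theorem~\ref{theorsum} applied to $R$, write $s = y + v$ with $y \in \m$ and $v \in \kappa$; since $s \notin \m$ we get $v \neq 0$, so $v$ is invertible in $\kappa$ and hence in $R_\m$. Similarly write $a = x_0 + u$ with $x_0 \in \m$, $u \in \kappa$. Then
$$\frac{a}{s} = \frac{x_0 + u}{s} = \frac{x_0}{s} + \frac{u}{s}.$$
The first summand $\frac{x_0}{s}$ lies in $\m R_\m$. For the second, I would write $\frac{u}{s} - u v^{-1} = \frac{u v^{-1}(v - s)}{s} = \frac{-u v^{-1} y}{s} \in \m R_\m$, using $v - s = -y \in \m$; hence $\frac{u}{s} = u v^{-1} + (\text{element of } \m R_\m)$ with $u v^{-1} \in \kappa$. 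Putting these together gives $\frac{a}{s} = u v^{-1} + (\text{element of } \m R_\m)$, which is the desired decomposition.

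Finally I would invoke Theorem~\ref{theorsum} in the other direction: since every element of $R_\m$ is of the form $x + u$ with $x \in \m R_\m$ and $u \in \kappa$, the pair $\m R_\m \in \Max{R_\m}$ and $\kappa \cont R_\m$ exhibit property $(\star)$ for $R_\m$, and in particular $R_\m/(\m R_\m) \iso \kappa \cont R_\m$ as claimed. The main obstacle is the middle step: one must be careful that the hypothesis ``$R \setminus \m$ has no zero divisors'' is genuinely needed to guarantee $R \to R_\m$ is injective (so that $\kappa$ really sits inside $R_\m$ as a field and $\m R_\m \cap \kappa = 0$), and to ensure the denominators appearing above behave well; everything else is a routine manipulation of fractions once the decomposition trick for $\frac{u}{s}$ is in hand.
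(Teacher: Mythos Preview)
Your proposal is correct and follows essentially the same approach as the paper: reduce to condition (2) of Theorem~\ref{theorsum}, decompose numerator and denominator via property $(\star)$ for $R$, observe that the $\kappa$-component of the denominator is nonzero and hence invertible, and use it to extract the $\kappa$-part of the fraction. Your computation of $\tfrac{u}{s}-uv^{-1}$ in a single step is marginally more streamlined than the paper's detour through $\tfrac{w}{b}$ and $\tfrac{1}{b}$, and you are more explicit than the paper about why the hypothesis that $R\setminus\m$ contains no zero divisors is needed (to ensure $R\hookrightarrow R_\m$ and hence $\kappa\cont R_\m$), but the arguments are otherwise the same.
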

\begin{proof}
Thanks to Theorem \ref{theorsdprod} and Theorem \ref{theorsum}, it suffices to show that for every $\dfrac{a}{b} \in R_\m$, there exist $\dfrac{x}{c} \in \m R_\m$ and $u\in \kappa$ such that $\dfrac{a}{b}=\dfrac{x}{c}+\dfrac{u}{1}$. Since $R$ has property $(\star)$, by Theorem \ref{theorsdprod} and Theorem \ref{theorsum}, there exist $y$, $z\in \m$ and $v$, $w\in \kappa$ such that $a=y+v$ and $b=z+w$. Moreover $w\neq 0$ since $b\in R\setminus \m$. We notice that $\dfrac{a}{b} -\dfrac{v}{b}=\dfrac{y}{b} \in\m R_\m$ and so it suffices to write $\dfrac{v}{b}$ as a sum of an element of $\m R_\m$ and an element of $\kappa$. We now observe that
\[\dfrac{w}{b}=\dfrac{b}{b}+\dfrac{w}{b}-\dfrac{b}{b}=\dfrac{b}{b}+\dfrac{w-z-w}{b}=1-\dfrac{z}{b},\]
and $\dfrac{z}{b} \in\m R_\m$. Since $w$ is invertible, we obtain
\[\dfrac{1}{b}=\dfrac{w^{-1}}{1}\cdot \dfrac{w}{b}=\dfrac{w^{-1}}{1}+\dfrac{w^{-1}}{1}\cdot 1-\dfrac{z}{b},\]
and $\dfrac{w^{-1}}{1}\cdot 1-\dfrac{z}{b}\in \m R_{\m}$. So, there exists $f\in \m R_{\m}$ such that $\dfrac{v}{b}=v\cdot w^{-1} + f$, and hence, $\dfrac{a}{b}$ can be written as a sum of an element in $\m R_\m$ and an element in $\kappa$.
\end{proof}

\begin{remark}
As a consequence, all integral domains that satisfy property $(\star)$ are contained in a ring of class (A).
\end{remark}

\begin{example}\label{exlockx}
   Consider the ring $\kappa[x]$ of polynomials with coefficients in a field $\kappa$ and its localization at its maximal ideal $\langle x\rangle$:
\[
R := \mathds{\kappa}[x]_{\langle x \rangle} = \left\{ \dfrac{f(x)}{g(x)} \in \mathds{\kappa}(x) \mid x \text{ does not divide } g(x) \right\} \subset \mathds{\kappa}(x).
\]
Since $\kappa[x]$ has property $(\star)$, by Proposition \ref{proplochasstar}, $R=\mathds{\kappa}[x]_{\langle x \rangle}$ has also property $(\star)$, exhibited by $\kappa$ and the only maximal ideal $\langle x \rangle R$ of $R$.

We notice that \( \kappa[x]_{\langle x\rangle} \) is not complete. Indeed, since $\kappa[x]$ is Noetherian and $\langle x\rangle$ is a maximal ideal of $\kappa[x]$, it is well known that the completion of \( \kappa[x]_{\langle x\rangle} \) coincides with the completion of $\kappa[x]$, which is the ring $\kappa\llbracket x \rrbracket$ of formal power series, and not all formal power series are rational (i.e.\ expressible as a fraction of polynomials).
\end{example}

\begin{remark}
    Example \ref{exlockx} shows that the class (A) of rings considered in Proposition \ref{propAB} strictly generalizes both the classes of rings  satisfying the assumptions of Cohen structure theorem and rings  satisfying the assumptions of Nagata's generalization.
\end{remark}

\end{document}